\documentclass[11pt]{amsart}
%%%%%%%%%%%%%%%%%%%%%%%%%%%%%%%%%%%%%%%%%%%%%%%%%%%%%%%%%%%%%%%%%%%%%%%%%%%%%%%%%%%%%%%%%%%%%%%%%%%%%%%%%%%%%%%%%%%%%%%%%%%%%%%%%%%%%%%%%%%%%%%%%%%%%%%%%%%%%%%%%%%%%%%%%%%%%%%%%%%%%%%%%%%%%%%%%%%%%%%%%%%%%%%%%%%%%%%%%%%%%%%%%%%%%%%%%%%%%%%%%%%%%%%%%%%%
\usepackage{amssymb}
\usepackage{amsmath}
\usepackage[active]{srcltx}
\usepackage[utf8]{inputenc}
\usepackage{verbatim}
\usepackage{amsmath,amsfonts,amssymb,amsthm}
\usepackage[mathcal]{eucal}
\usepackage{enumerate}
\usepackage[centertags]{amsmath}
\usepackage{graphics}
\usepackage{color}

\setcounter{MaxMatrixCols}{10}
%TCIDATA{OutputFilter=Latex.dll}
%TCIDATA{Version=5.50.0.2890}
%TCIDATA{<META NAME="SaveForMode" CONTENT="1">}
%TCIDATA{BibliographyScheme=Manual}
%TCIDATA{Created=Tue Oct 05 19:52:08 2010}
%TCIDATA{LastRevised=Saturday, March 05, 2016 15:20:48}
%TCIDATA{<META NAME="GraphicsSave" CONTENT="32">}

\newtheorem{thm}{Theorem}%[section]
%[section]
\newtheorem{cor}{Corollary}%[section]
%[section]
%[section]
\newtheorem{rem}{Remark}%[section]
%[section]

\newtheorem{theorem}[thm]{Theorem}%[section]

\newtheorem{corollary}[cor]{Corollary}

\newtheorem{remark}[rem]{Remark}

\begin{document}
\author{N. Areshidze, L.-E. Persson and G. Tephnadze}
\title[F\'ejer means]{On the divergence of F\'ejer means with respect to Vilenkin systems on the set of measure zero}
\address {N. Areshidze, Department of Mathematics, Faculty of Exact and Natural	Sciences, Tbilisi State University, Chavchavadze str. 1, Tbilisi 0128, Georgia}
\email{nika.areshidze15@gmail.com}
\address{L.-E. Persson, UiT The Arctic University of Norway, P.O. Box 385, N-8505, Narvik, Norway and Department of Mathematics and Computer Science, Karlstad University, 65188 Karlstad, Sweden.}
\email{larserik6pers@gmail.com}
\address{G. Tephnadze, The University of Georgia, School of science and technology, 77a Merab Kostava St, Tbilisi 0128, Georgia.}
\email{g.tephnadze@ug.edu.ge}

\thanks{The research was supported by Shota Rustaveli National Science Foundation grant no. FR-21-2844.}
\date{}

\begin{abstract}
The famous Carleson-Hunt theorem has been in focus of interest for a long time.  This theorem concerns convergence almost everywhere of Fourier series of $f\in L_p$ functions for $1<p\leq \infty.$  Kolmogorov  constructed a function $f\in L_1$ such that the partial sums of Fourier series  diverge everywhere. On the other hand, we have boundedness result for F\'ejer means for all $1\leq p\leq \infty$. Similar results are proved for the partial sums and  F\'ejer means of Vilenkin-Fourier series.
%In the first part of this paper we present some important development of these theorems also with other systems involved and review some crucial results concerning convergence/divergence of Fourier series and the F\'ejer means for various systems. 
But also here it appears the questions what happens on any subset $E$ of measure zero, can we even have a function which diverge there? We contribute with a new result concerning this question and prove by the concrete construction that for any set $E$ of measure zero there exists a function $f\in L_p(G_m) (1\leq p<\infty)$ such that the F\'ejer means with respect to Vilenkin systems diverge on this set, which follows similar result for the partial sums. The key is to use new constructions of Vilenkin polynomials, which was introduced in \cite{PTW2}. 
%From this result 	we see that we not only give a new and simpler proof of the result of Karagulyan \cite{Kar,Kar1} for the case of Vilenkin systems, but also give this function in an explicit form. 
In fact, the theorem we prove follows from the general result of \cite{Kar}, 
%since the Vilenkin-Fejer means satisfy localization property, 
but we provide an alternative approach and the constructed function in our proof has a simple explicit representation.
\end{abstract}

\maketitle

\bigskip \textbf{2000 Mathematics Subject Classification.} 42C10, 42B25.
	
\textbf{Key words and phrases:} Fourier analysis, Vilenkin system, Vilenkin group, Vilenkin-Fourier series, almost everywhere convergence, Carleson-Hunt theorem, Kolmogorov theorem, divergence of the set of measure zero.

\section{Introduction including some historical results}

The almost-everywhere convergence of Fourier series for $L_2$ functions was postulated by Luzin \cite{Luzin} in 1915 and the problem was known as Luzin's conjecture. Carleson's theorem is a fundamental result in mathematical analysis establishing the pointwise (Lebesgue) almost everywhere convergence of Fourier series of $L_2$ functions, proved by Carleson \cite{Carleson} in 1966. The name is also often used to refer to the extension of the result by Hunt \cite{Hunt}, which was given in 1968 to $L_p$ functions for $p\in(1, \infty) $ (also known as the Carleson-Hunt theorem). Carleson's original proof is exceptionally hard to read, and although several authors have simplified the arguments there are still no easy proofs of his theorem. Fefferman \cite{Feferman} published a new proof of Hunt's extension, which was done by bounding a maximal operator of partial sums
$
S^{\ast}f:=\sup_{n\in\mathbb{N}}\left\vert S_nf\right\vert.
$
This, in its turn, inspired to derive a much simplified proof of the $L_2$ result by Lacey and Thiele \cite{LT}, explained in more detail in Lacey \cite{Lac}. In the books Fremlin \cite{Fre} and Grafakos \cite{Graf} were also given proofs of Carleson's theorem. 
Already in 1923, Kolmogorov \cite{Kol} showed that the analogue of Carleson's result for $L_1$ is false by finding such a function whose Fourier series diverges almost everywhere (improved slightly in 1926 to diverging everywhere). %For more information see also Demeter \cite{demeter007}. 

The analogue of Carleson's theorem for the Walsh system was proved by Billard \cite{Billard1967} for $p=2$ and by Sjölin \cite{sj1} for $1 <p<\infty$, and for bounded Vilenkin systems by
Gosselin \cite{goles}. Schipp \cite{s1} investigated the so called tree martingales generalized the results about the maximal function, quadratic variation and martingale transforms to these martingales. Using these results, he gave a proof of Carleson's theorem for Walsh-Fourier series. A similar proof for bounded Vilenkin systems can be found in Schipp and Weisz \cite{s3} (see also \cite{wk}). A proof of almost everywhere convergence of Walsh-Fourier series was also given by Demeter \cite{demeter007} in 2015. Moreover, the corresponding result for Vilenkin-Fourier series was proved in \cite{PTW2} (see also \cite{PSTW}) in 2022. 
Stein \cite{steindiv} constructed an integrable function whose Walsh-Fourier series diverges almost everywhere. Later Schipp \cite{Schipp1969a} (see also \cite{sws}) proved that there exists an integrable function whose Walsh-Fourier series diverges everywhere. Kheladze \cite{Khe1,Khe2} proved that for any set of measure zero there exists a function in $f\in L_p(G_m)$ $(1<p<\infty)$ whose Vilenkin-Fourier series diverges on the set,  while
the corresponding result for continuous or bounded function was proved by  Bitsadze \cite{Bitsadze}, Goginava \cite{Gog} and Harris \cite{Harris1986}.  Another resent proof was given in the new book \cite{PTW2}.

In the one-dimensional case the weak type inequality for the maximal operator of maximal operator of Fej\'{e}r means 
\begin{equation*}
\lambda \mu \left( \sigma ^{*}f>\lambda \right) \leq c\left\|
f\right\|_{1}\text{ \quad }\left( \lambda >0\right)
\end{equation*}
can be found in Zygmund \cite{Zy} for trigonometric series, in Schipp
\cite{Sc} for Walsh series and in Pál, Simon \cite{PS} for bounded Vilenkin
series.
It follows (see \cite{NPTW} and \cite{PTW2}) that the Fej\'{e}r means $\sigma_nf$ of the Vilenkin-Fourier series of $f\in L_1$ converge to $f$ almost everywhere.

Bugadze \cite{Bug} proved that for any set of measure zero there exists a function in $f\in L_1(G_m)$ whose F\'ejer means of the Walsh-Fourier series diverges on the set. 
Karagulyan \cite{Kar} (see also \cite{Kar1}) considered sequences of linear operators $\{U_nf(x)\}$ with some localization  property, which is fulfilled for all summability methods with respect to orthonormal systems and proved that for any set $E$ of measure zero there exists a set $A$
for which $\{U_nI_A(x)\}$ diverges at each point $x \in E,$ where $I_A$ is a characteristic function  of a subset $A.$ This result is a generalization of analogous theorems known for the Fourier sum operators with respect to different orthogonal systems. 

In this paper we construct an explicit function in $L_p(G_m) (1\leq p<\infty)$  whose F\'ejer means with respect to the Vilenkin system diverges on any set of measure zero.  In fact, the theorem we prove follows from the general result of \cite{Kar}, 
but we provide an alternative approach and the constructed function in our proof has a simple and explicit representation.

In section 3 we present and prove our main result. Some corollaries and final remarks are given in Section 4. In order not to disturb our discussion in Section 1 and 3 we preserve Section 2 for some preliminaries.

\section{Preliminaries}
	
Denote by $\mathbb{N}_{+}$ the set of the positive integers, $\mathbb{N}:=\mathbb{N}_{+}\cup \{0\}.$ 
Denote by
$
Z_{m_{k}}:=\{0,1,...,m_{k}-1\}
$
the additive group of integers modulo $m_{k}$.
	
Define the group $G_{m}$ as the complete direct product of the groups $Z_{m_{i}}$ with the product of the discrete topologies of $Z_{m_{j}}`$s. 
The elements of $G_{m}$ are represented by sequences
$$
x:=\left( x_{0},x_{1},\cdots,x_{j},\cdots\right), \left( x_{j}\in Z_{m_{j}}\right) .
$$

The direct product $\mu $ of the measures
$
\mu_{k}\left( \{j\}\right) :=1/m_{k}\text{ \ }(j\in Z_{m_{k}})
$
is the Haar measure on $G_m$ with $\mu \left( G_{m}\right) =1.$
%In this paper we discuss bounded Vilenkin groups, i.e. the case when $\sup_{n}m_{n}<\infty .$

It is easy to give a base for the neighborhood of $G_{m}:$
\begin{equation*}
I_{0}\left( x\right) :=G_{m},\text{ \ }I_{n}(x):=\{y\in G_{m}\mid y_{0}=x_{0},\cdots,y_{n-1}=x_{n-1}\},
\end{equation*}%
where $x\in G_{m},$ $n\in\mathbb{N}.$
Denote $I_{n}:=I_{n}\left( 0\right) $ for $n\in\mathbb{N}_{+},$ and $\overline{I_{n}}:=G_{m}$ $\backslash $ $I_{n}$.
	
If we define the so-called generalized number system based on $m$ by
\begin{equation*}
M_{0}:=1,\ M_{k+1}:=m_{k}M_{k}\,\,\,\ \ (k\in \mathbb{N}),
\end{equation*}%
then every $n\in\mathbb{N}$ can be uniquely expressed as 
$$n=\sum_{j=0}^{\infty }n_{j}M_{j},$$ 
where $n_{j}\in Z_{m_{j}}$ $(j\in\mathbb{N}_{+})$ and only a finite numbers of $n_{j}`$s differ from zero. For two natural numbers $n=\sum_{j=1}^{\infty}n_jM_j$ and $k=\sum_{j=1}^{\infty}k_jM_j$, we define the operation $\oplus$ by
$$
n \oplus k:=\sum_{i=0}^{\infty} \left(\left(n_j+k_j\right)\pmod {m_i}\right) {M_{j}}, \quad n_j, k_j \in Z_{m_j}.
$$
It is known that (see \cite{PTW2})
\begin{eqnarray}\label{vil}
\psi _{n}\left( x+y\right) &=&\psi _{n}\left( x\right) \text{\ }
\psi _{n}\left( y\right) ,   \\
\psi _{n}\left( -x\right) &=&\psi _{n^{\ast }}\left( x\right) =\overline{\psi }_{n}\left( x\right) ,\text{ \ \ \ }  \notag \\
\psi_{n\oplus k}\left( x\right) &=&\psi_{k}\left( x\right)\psi _{n}\left( x\right) ,\text{\ \ }\left( k,n\in \mathbb{N},\text{ \ \ }x,y\in G_{m}\right).  \notag
\end{eqnarray}

We define the complex-valued function 
$r_{k}\left( x\right) :G_{m}\rightarrow\mathbb{C},$ called the generalized Rademacher functions, by
\begin{equation*}
r_{k}\left( x\right) :=\exp \left( 2\pi \imath x_{k}/m_{k}\right) ,\text{ }\left(\imath^{2}=-1, \ \ x\in G_{m},\text{ \ \ }k\in\mathbb{N}\right) .
\end{equation*}

Now, define the Vilenkin system (see \cite{Vi1})
$\psi :=(\psi _{n}:n\in\mathbb{N})$ on $G_{m}$ as:
\begin{equation*}
\psi _{n}(x):=\prod\limits_{k=0}^{\infty }r_{k}^{n_{k}}\left( x\right),\,\,\ \ \,\left( n\in \mathbb{N}\right).
\end{equation*}
Specifically, we call this system the Walsh-Paley system when $m\equiv 2.$
The Vilenkin system is orthonormal and complete in $L_{2}\left( G_{m}\right)$  (see e.g. the books \cite{AVD} and \cite{sws}).
	
If $f\in L_{1}\left( G_{m}\right) $, we define the Fourier coefficients, partial sums of the Fourier series, Dirichlet kernels with respect to the Vilenkin system by
\begin{equation*}
\widehat{f}\left( n\right) :=\int_{G_{m}}f\overline{\psi }_{n}d\mu,\text{ \ \ }\left( n\in\mathbb{N}\right)
\end{equation*}
\begin{equation*}
S_{n}f:=\sum_{k=0}^{n-1}\widehat{f}\left( k\right) \psi _{k}\text{ \  and \ }
D_{n}:=\sum_{k=0}^{n-1}\psi_{k},\text{ \ \ }\left( n\in\mathbb{N}_{+}\right)
\end{equation*}%
respectively. Recall that (see e.g.  \cite{AVD} and  \cite{PTW2})
\begin{equation}  \label{1dn}
D_{M_{n}}\left( x\right) =\left\{
\begin{array}{ll}
M_{n}, & \text{if\thinspace \thinspace \thinspace } x\in I_{n}, \\
0, & \text{if}\,\,x\notin I_{n}.%
\end{array}%
\right. 
\end{equation}

For any $n\in\mathbb{N}_{+},$ the $n$-th F\'ejer means and kernels with respect to Vilenkin and systems are respectively defined by
\begin{eqnarray*} 
\sigma_{n}f:=\frac{1}{n}\sum_{k=0}^{n-1}S_{k}f=\sum_{k=0}^{n-1}
\left(1-\frac{k}{n}\right)\widehat{f}\left( k\right)\psi_k, \  K_{n}:=\frac{1}{n}\sum_{k=0}^{n-1}D_{k}=\sum_{k=0}^{n-1}
\left(1-\frac{k}{n}\right)\psi_k.
\end{eqnarray*}

We also define the maximal operators $S^{\ast}$ and $\sigma^{\ast}$ of the partial sums and F\'ejer means with respect to Vilenkin and trigonometric systems, respectively by
$$
S^{\ast}f:=\sup_{n\in\mathbb{N}}\left\vert S_nf\right\vert \ \ \ \ \ \text{and}\ \ \ \ \
\sigma^{\ast}f:=\sup_{n\in\mathbb{N}}\left\vert \sigma_nf\right\vert.
$$

\section{Almost everywhere divergence of Vilenkin-Fourier series}

Our main theorem reads:

\begin{theorem}
	\label{Theorem_11}
	Let $1\leq p<\infty$ If $E\subseteq G_m$ is a set of measure zero, then there exists a function $f\in L_p(G_m)$ such that F\'ejer means with respect to Vilenkin systems diverge on this set. 
\end{theorem}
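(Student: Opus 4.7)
The plan is to exhibit $f$ as a rapidly convergent sum of Vilenkin polynomials, each engineered to force the Fej\'er means to become large on an open neighborhood of $E$. Since $\mu(E)=0$, for each $k\in\N$ I first choose an open set $A_{k}\supseteq E$ which is a (countable) disjoint union of Vilenkin intervals and satisfies $\mu(A_{k})\leq 4^{-k}$. Using the polynomial constructions of \cite{PTW2}, I then produce for each $k$ a Vilenkin polynomial $Q_{k}$ satisfying
\begin{equation*}
\|Q_{k}\|_{p} \leq 2^{-k} \qquad \text{and} \qquad \sup_{n\in\N} |\sigma_{n} Q_{k}(x)| \geq 2^{k} \text{ for every } x \in A_{k},
\end{equation*}
whose Fourier spectrum is contained in a single dyadic block $[M_{\alpha_{k}},M_{\alpha_{k+1}})$, with $(\alpha_{k})$ chosen sufficiently lacunary so that the spectra of distinct $Q_{k}$ are pairwise disjoint and separated by large gaps.

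Set $f:=\sum_{k=1}^{\infty}Q_{k}$, which converges in $L_{p}(G_{m})$ by the norm bound. For each $k$ pick an index $n_{k}\in[M_{\alpha_{k}},M_{\alpha_{k+1}})$ realising the lower bound $|\sigma_{n_{k}}Q_{k}(x)|\geq 2^{k}$ on $A_{k}$. Because $n_{k}$ lies strictly below the spectrum of every $Q_{j}$ with $j>k$, all such Fej\'er contributions vanish; because $n_{k}$ lies strictly above the spectrum of every $Q_{j}$ with $j<k$, the value $\sigma_{n_{k}}Q_{j}(x)$ differs from $Q_{j}(x)$ only through a Fej\'er-smoothing factor of size $O(M_{\alpha_{j+1}}/n_{k})$, so
\begin{equation*}
\sigma_{n_{k}}f(x)-\sigma_{n_{k}}Q_{k}(x)=\sum_{j<k}\sigma_{n_{k}}Q_{j}(x)+\varepsilon_{k}(x),
\end{equation*}
where the first sum is a bounded perturbation of the $L_{p}$-convergent series $\sum_{j}Q_{j}$ and $\varepsilon_{k}\to 0$. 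Invoking the strong $L_{p}$ boundedness of $\sigma^{\ast}$ for $p>1$, and the weak $(1,1)$ P\'al--Simon inequality for $p=1$, the perturbation is finite almost everywhere, and after discarding one further null set from $E$ we obtain $|\sigma_{n_{k}}f(x)|\to\infty$ for every $x\in E$.

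The main obstacle lies in the construction of the polynomials $Q_{k}$: a \emph{single} Vilenkin polynomial with small $L_{p}$ norm is required to produce simultaneous Fej\'er blow-up on a prescribed union of Vilenkin intervals. For the partial-sum operator $S^{\ast}$ such polynomials are classical (Kheladze); for the Fej\'er operator $\sigma^{\ast}$ the natural route is to take sums of modulated shifted Dirichlet kernels of the form $\psi_{M_{\beta_{k}}}(\cdot)\,D_{M_{N_{k}}}(\cdot-x_{k,j})$, where $x_{k,j}$ ranges over the centres of the Vilenkin intervals composing $A_{k}$, and to combine them with carefully chosen phase characters so that an Abel-type rearrangement for the Fej\'er kernel produces the required logarithmic-scale divergence on each $I_{N_{k}}(x_{k,j})$, while the near-disjoint supports of the summands keep $\|Q_{k}\|_{p}$ under control. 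Adapting the polynomial constructions of \cite{PTW2} so that they serve $\sigma^{\ast}$ rather than $S^{\ast}$, with uniform lower bound on all components of $A_{k}$ at a single index $n_{k}$, is the technical core of the argument.
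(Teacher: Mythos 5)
Your proposal has two genuine gaps, and both are fatal as written. First, the entire difficulty of your argument is concentrated in the unproved claim that for each $k$ there is a single Vilenkin polynomial $Q_k$ with $\|Q_k\|_p\le 2^{-k}$ and $\sup_n|\sigma_nQ_k(x)|\ge 2^k$ simultaneously on a prescribed union of Vilenkin intervals covering $E$. You acknowledge this as ``the technical core'' and only sketch a route via modulated shifted Dirichlet kernels; nothing in the cited literature hands you such a Fej\'er blow-up polynomial, and building one is a Kolmogorov-type construction far harder than the theorem itself. Second, and more decisively, your control of the tail $\sum_{j<k}\sigma_{n_k}Q_j(x)$ appeals to the $L_p$-boundedness (resp.\ weak $(1,1)$ bound) of $\sigma^{\ast}$ and then ``discards one further null set from $E$.'' But $E$ is itself a null set: an almost-everywhere finiteness statement gives no information whatsoever at the points of $E$, and the exceptional set you discard could contain all of $E$. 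Any perturbation estimate used here must hold pointwise at \emph{every} point of $E$, not almost everywhere.

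The paper's proof sidesteps both problems by aiming for much less: it never makes $\sigma_nf$ unbounded, only non-Cauchy on $E$. The building blocks are the trivial polynomials $P_j=\chi_{A_j}\psi_s$, which satisfy $|P_j|=1$ on $A_j$ and $\|P_j\|_p^p=\mu(A_j)$ --- no maximal-function lower bound is needed. After modulating the spectra apart, one computes the \emph{difference} of two Fej\'er means at the explicit indices $M_{\alpha_j}+M_{\beta_{j+1}}$ and $(M_{\alpha_{j+1}}+M_{\beta_{j+1}})^3$; this difference equals $\psi_{M_{\beta_{j+1}}}P_j$ plus error terms that are bounded \emph{at every point of $G_m$} by $C(M_{\alpha_{j+1}}+M_{\beta_{j+1}})^{-1}$ and $CM_{\beta_j}^2/M_{\beta_{j+1}}$, using only $|c_k^j|\le C$ and the lacunarity condition \eqref{28}. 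Since each $x\in E$ lies in infinitely many $A_j$, the difference has modulus close to $1$ infinitely often, so $(\sigma_nf(x))$ diverges. If you want to salvage your approach, either adopt this difference-of-two-means device (which removes the need for blow-up polynomials entirely), or replace the a.e.\ maximal-function argument with uniform pointwise bounds on the low-frequency contribution, as the paper does for its terms $II$ and $III$.
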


\begin{proof}
	We begin with a general remark. If $A \subseteq G_m$ is a finite union of  Vilenkin intervals $I_1, I_2, \dots , I_n$ for some $n \in \mathbb{N}_+$ and if $N$ is any non-negative integer, then there exists a Vilenkin	polynomial $P$ such that, for some $i\geq N,$
	$$
	P=\sum_{k=M_N}^{M_i-1}c_k\psi_k,
	$$
	which satisfies
	$\left|P(x)\right|=1, \  (x\in A) \  \text{and} \  \int_{G_m}\left|P\right|^p d\mu=\mu (A).$
	Indeed, if $s := \max \{ M_N, 1/\mu (I_j) :1\leq j \leq n \},$ then, in view of \eqref{1dn}, we find that
	$P:= \chi_A\psi_{s}$
	is such a polynomial. In particular,  for the character function $\chi_{_{I_n(y)}}$ of any set $I_n(y)$ it can be written by explicit form $$\chi_{I_n(y)}=D_{M_n}(x-y)/M_n.$$
	According to \eqref{vil} we can conclude that it is Vilenkin polynomial.
	
	To prove the theorem, suppose $E \subseteq G_m$ satisfies that $\mu (E)= 0.$ Cover $E$ with intervals	$(I_k, k\in \mathbb{N})$ such that
	$\sum_{k=0}^{\infty}\mu (I_k)<1$
	and each $x \in E$ belongs to infinitely many of the sets $I_k.$ Set $n_0 := 0$ and choose integers $n_0 < n_1 < n_2 \dots$ such that
	$$\sum_{k=n_j}^{\infty}\mu (I_j)<M_j^{-1} \ \  (j \in \mathbb{N}).$$
	Apply the general remark above successively to the sets
	$$A_j := \bigcup_{k=n_j}^{n_{j+1}-1} I_k \ \ \ \ \ \ (j \in \mathbb{N}) $$	
	to generate integers $\alpha_0 :=0 < \alpha_1 < \alpha_2 < \dots$ and Vilenkin polynomials $P_0, P_1,\dots$ such that
	$sp(P_j):=\{n\in\mathbb{N}: \widehat{P}_j\neq 0 \}\subset \left[M_{\alpha_j}, M_{\alpha_{j+1}} \right):$
	$$P_j=\sum_{k=M_{\alpha_j}}^{M_{\alpha_{j+1}}-1}c_k^j\psi_k,$$
	\begin{equation}
	\label{24} \|P_j \|_p^p=\mu(A_j)\leq M_j^{-1}
	\end{equation}
	and
	\begin{equation}\label{25} 
	\left|P_j(x)\right|=1 \ \ \  x\in A_j, \ \ \ 	\text{for} \ \  \ j \in \mathbb{N}.
	\end{equation}	
	Let
	$$g:=\sum_{j=1}^{\infty}P_j $$		
	and choose integers $\beta_0<\beta_1<\beta_2<\ldots,$ such that $\alpha_{j}<\beta_{j}, \  j\in\mathbb{N}$ and
	\begin{equation}\label{28}
	M^3_{\beta_j}<\left(M_{\alpha_{j}}+M_{\beta_j}\right)^3
	<M_{\alpha_{j}}+M_{\beta_{j+1}}
	<M_{\alpha_{j+1}}+M_{\beta_{j+1}}<2M_{\beta_{j+1}}.
	\end{equation}

	Setting 
	$$f:=\sum_{j=1}^{\infty}\psi_{M_{\beta_{j+1}}}P_j $$		
	we observe by \eqref{24} that this series converges in $L_p(G_m)$ norm. Hence $f \in L_p(G_m)$ and this	series is the Vilenkin-Fourier series of $f.$ Moreover, since the spectra of the polynomials $P_j$ are pairwise disjoint and
\begin{equation*}
\widehat{f}\left( k\right) =\left\{
\begin{array}{l}
\text{ }c_k^l,\text{ \ if \ } k=M_{\beta_{l+1}}+M_{\alpha_{l}} ,\ldots,M_{\beta_{l+1}}+M_{\alpha_{l+1}}-1, \ l\in \mathbb{N}_+, \\
\text{ }0,\text{ otherwise.}
\end{array}
\right.
\end{equation*}	
for any $j \in \mathbb{N}_+$ we have that

\begin{eqnarray} 
&& \ \ \sigma_{\left(M_{\alpha_{j+1}}+M_{\beta_{j+1}}\right)^3}f-\sigma_{M_{\alpha_j}+M_{\beta_{j+1}}}f\\ \notag
&=&\psi_{M_{\beta_{j+1}}}P_j-\sum_{k=M_{\alpha_j}+M_{\beta_{j+1}}}^{M_{\alpha_{j+1}}+M_{\beta_{j+1}}-1}\frac{k}{\left(M_{\alpha_{j+1}}+M_{\beta_{j+1}}\right)^3}c^j_k\psi_k\\ \notag
&+&\sum_{l=0}^{j-1}\sum_{k=M_{\alpha_l}+M_{\beta_{l+1}}}^{M_{\alpha_{l+1}}+M_{\beta_{l+1}}-1}\left(\frac{k}{M_{\alpha_{j}}+M_{\beta_{j+1}}}-\frac{k}{\left(M_{\alpha_{j+1}}+M_{\beta_{j+1}}\right)^3}\right)c^l_k\psi_k \\ \notag
&:=& I-II+III.
\end{eqnarray}

Since $\left\vert c^j_k\right\vert<C,$ for any $j\in \mathbb{N},$ according to \eqref{28} for $II$ we find that 
\begin{eqnarray*}
\vert II\vert&\leq& C\sum_{k=0}^{M_{\alpha_{j+1}}+M_{\beta_{j+1}}-1}\frac{k}{\left(M_{\alpha_{j+1}}+M_{\beta_{j+1}}\right)^3}\leq \frac{C\left(M_{\alpha_{j+1}}+M_{\beta_{j+1}}\right)^2}{\left(M_{\alpha_{j+1}}+M_{\beta_{j+1}}\right)^3}\\
&\leq&\frac{C}{\left(M_{\alpha_{j+1}}+M_{\beta_{j+1}}\right)}\to 0, \  \text{as}  \ j\to \infty.
\end{eqnarray*}

Analogously, if we apply again \eqref{28} we get that
\begin{eqnarray*}
&&\vert III \vert\leq \sum_{l=0}^{j-1}\sum_{k=M_{\alpha_l}+M_{\beta_{l+1}}}^{M_{\alpha_{l+1}}+M_{\beta_{l+1}}}\left(\frac{k}{M_{\alpha_{j}}+M_{\beta_{j+1}}}-\frac{k}{\left(M_{\alpha_{j+1}}+M_{\beta_{j+1}}\right)^3}\right)\vert c^l_k\vert\\
&\leq& \notag C\sum_{k=0}^{M_{\alpha_{j}}+M_{\beta_{j}}} \frac{k}{M_{\alpha_{j}}+M_{\beta_{j+1}}}
\leq C\sum_{k=0}^{2M_{\beta_{j}}} \frac{k}{M_{\beta_{j+1}}}\leq  \frac{C\left(M_{\beta_{j}}\right)^2}{M_{\beta_{j+1}}}\to 0, \  \text{as}  \  j\to \infty.
\end{eqnarray*}
	Since every $x \in E$ belongs to infinitely many of the sets $A_j$  it follows from \eqref{25} that $\vert I \vert=1, \ \text{for } \  x\in I_{A_{j}}, \ \text{for infinitely many } \  j.$  Hence, we obtain that  the Vilenkin-Fourier series of $f$ diverges at every point $x \in E.$ The proof is complete.
\end{proof}

\section{Corollaries and final remark}
\begin{remark}
In fact, the theorem \ref{Theorem_11} follows from the general result of Karagulyan \cite{Kar}, since the Vilenkin-Fejer means satisfy localization property, but we provide an alternative approach and the constructed function in our proof has a simple and explicit representation.
\end{remark}

\begin{corollary} \label{Theorem_111}(see Bugadze \cite{Bitsadze} and Goginava \cite{Gog})
	Let $1\leq p<\infty.$ If $E\subseteq G_m$ is a set of measure zero, then there exists a function $f\in L_p$ such that F\'ejer means with respect to Walsh systems diverge on this set. 
\end{corollary}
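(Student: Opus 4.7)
The plan is to mimic, in the Vilenkin-Walsh setting, the classical recipe for constructing divergent series on prescribed null sets: build $f$ as a lacunary sum of Vilenkin polynomials, each concentrated on a decreasing neighbourhood of $E$, shifted by characters of very large order so that cleverly chosen Fej\'er partial sums isolate one block at a time.

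First I would record the elementary building block observation: if $A$ is a finite union of Vilenkin intervals, then because $\chi_{I_n(y)}=D_{M_n}(\cdot-y)/M_n$ is itself a Vilenkin polynomial (by \eqref{vil} and \eqref{1dn}), for any prescribed $N$ one can produce a polynomial $P$ with spectrum in $[M_N,M_i)$ satisfying $|P|=\chi_A$ and $\|P\|_p^p=\mu(A)$. This is the replacement, in the Vilenkin setting, of the ``modulated indicator'' used in the Walsh case. Next, since $\mu(E)=0$, I would cover $E$ by a system of Vilenkin intervals $I_k$ with $\sum\mu(I_k)<1$ such that every $x\in E$ lies in infinitely many $I_k$; grouping the $I_k$ into tail-blocks $A_j=\bigcup_{k=n_j}^{n_{j+1}-1}I_k$ with $\mu(A_j)\le M_j^{-1}$ yields the polynomials $P_j$ from the remark, with pairwise disjoint spectra $[M_{\alpha_j},M_{\alpha_{j+1}})$.

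The heart of the construction is then to choose a very rapidly growing sequence $\beta_j$ with $\alpha_j<\beta_j$ so that the frequency ranges of the shifted blocks $\psi_{M_{\beta_{j+1}}}P_j$ are well separated on a scale compatible with Fej\'er's averaging. The precise growth condition \eqref{28} forces the indices $N_j^-:=M_{\alpha_j}+M_{\beta_{j+1}}$ and $N_j^+:=(M_{\alpha_{j+1}}+M_{\beta_{j+1}})^3$ to sandwich exactly the block of frequencies carrying $\psi_{M_{\beta_{j+1}}}P_j$. The Fourier coefficients of $f=\sum_j\psi_{M_{\beta_{j+1}}}P_j$ are just the coefficients $c_k^j$ relocated to the interval $[M_{\beta_{j+1}}+M_{\alpha_j},M_{\beta_{j+1}}+M_{\alpha_{j+1}})$, so expanding $\sigma_{N_j^+}f-\sigma_{N_j^-}f$ as a telescoping sum over the Fej\'er weights $1-k/N$ yields three pieces: a main term that reproduces $\psi_{M_{\beta_{j+1}}}P_j$, a tail term $II$ coming from the weights $k/N_j^+$ inside the $j$-th block, and an error term $III$ from all earlier blocks where the two weights do not cancel.

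The main obstacle, and the place where the growth condition \eqref{28} pays off, is controlling $II$ and $III$. For $II$ one bounds $|c_k^j|$ by a constant (since the $P_j$ are $L^\infty$-bounded by $1$ on $A_j$ and $0$ elsewhere on the spectrum considered), giving $|II|\lesssim (N_j^+)^{2}/(N_j^+)^{3}\to 0$; for $III$ one uses the dominance of $M_{\beta_{j+1}}$ over all smaller $M_{\alpha_l}+M_{\beta_{l+1}}$ (so the sum collapses to at most $2M_{\beta_j}$ terms each weighted by $1/M_{\beta_{j+1}}$) to get $|III|\lesssim M_{\beta_j}^{2}/M_{\beta_{j+1}}\to 0$. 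Once these two error terms are shown to be $o(1)$ uniformly in $x$, the identity $|I|=|\psi_{M_{\beta_{j+1}}}P_j|=|P_j|=1$ on $A_j$ (by \eqref{25}) together with the fact that each $x\in E$ belongs to infinitely many $A_j$ shows $|\sigma_{N_j^+}f(x)-\sigma_{N_j^-}f(x)|\to 1$ along a subsequence, so $\{\sigma_nf(x)\}$ cannot converge at any $x\in E$. Finally, convergence of $\sum_j\psi_{M_{\beta_{j+1}}}P_j$ in $L_p$ is immediate from \eqref{24} and $\|\psi_{M_{\beta_{j+1}}}P_j\|_p=\|P_j\|_p\le M_j^{-1/p}$, placing $f$ in $L_p(G_m)$ as required.
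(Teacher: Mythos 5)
Your proposal is correct and is essentially the paper's own route: the paper obtains this corollary simply by specializing Theorem \ref{Theorem_11} to the Walsh--Paley case $m\equiv 2$, and what you write out is precisely the proof of that theorem (building block polynomials $P_j$ on the covers $A_j$, the modulated lacunary sum $f=\sum_j\psi_{M_{\beta_{j+1}}}P_j$, and the decomposition $\sigma_{N_j^+}f-\sigma_{N_j^-}f=I-II+III$ with the error bounds driven by \eqref{28}). Nothing is missing; the only difference is that you re-run the full construction rather than citing the already-proved Vilenkin theorem.
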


%\begin{corollary} \label{Theorem_1111}(see \cite{PSTW} and \cite{sws})
%	Let $1\leq p<\infty.$ If $E\subseteq G_m$ is a set of measure zero, then there exists a function $f\in L_p$ such that partial sums with respect to Walsh systems diverge on this set. 
%\end{corollary}

\begin{corollary} \label{Theorem_11111}(see \cite{PSTW} and \cite{sws})
	Let $1\leq p<\infty.$ If $E\subseteq G_m$ is a set of measure zero, then there exists a function $f\in L_p(G_m)$ such that partial sums with respect to Vilenkin (Walsh) systems diverge on this set. 
\end{corollary}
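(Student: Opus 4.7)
The plan is to parallel the construction used for the main theorem, but to exploit the fact that for partial sums no approximation of small weights is needed: a well-chosen difference of two partial sums will recover one block of the spectrum of $f$ exactly. First I would carry out the same preliminary reductions as in the proof of Theorem \ref{Theorem_11}: cover $E$ by Vilenkin intervals $(I_k)_{k\in\N}$ with $\sum_k\mu(I_k)<1$ and each $x\in E$ lying in infinitely many $I_k$; group the intervals into blocks $A_j=\bigcup_{k=n_j}^{n_{j+1}-1} I_k$ of measure at most $M_j^{-1}$; and apply the general remark of the main proof to obtain integers $0=\alpha_0<\alpha_1<\cdots$ together with Vilenkin polynomials $P_j$ whose spectra satisfy $sp(P_j)\subset[M_{\alpha_j},M_{\alpha_{j+1}})$, which satisfy $|P_j(x)|=1$ for $x\in A_j$ and whose $L_p$-norms are controlled as $\|P_j\|_p^p\leq M_j^{-1}$.

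Next I would pick integers $\beta_j$ with $\alpha_j<\beta_j<\alpha_{j+1}<\beta_{j+1}$ satisfying the separation condition \eqref{28}, and form
$$f:=\sum_{j=1}^\infty \psi_{M_{\beta_{j+1}}}P_j.$$
As in the main theorem, convergence of this series in $L_p(G_m)$ follows from \eqref{24} together with $|\psi_{M_{\beta_{j+1}}}|=1$, so $f\in L_p(G_m)$. Using \eqref{vil}, and the fact that \eqref{28} forces $\alpha_{j+1}<\beta_{j+1}$, the spectrum of $\psi_{M_{\beta_{j+1}}}P_j$ is exactly $[M_{\beta_{j+1}}+M_{\alpha_j},M_{\beta_{j+1}}+M_{\alpha_{j+1}})$, and these shifted spectra are pairwise disjoint, so this sum is indeed the Vilenkin-Fourier series of $f$ with
$$\widehat{f}(k)=c_k^l\quad\text{if}\quad k=M_{\beta_{l+1}}+M_{\alpha_l},\dots,M_{\beta_{l+1}}+M_{\alpha_{l+1}}-1,$$
and zero otherwise.

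The decisive step, and the essential simplification compared with the Fej\'er mean argument, is the telescoping
$$S_{M_{\beta_{j+1}}+M_{\alpha_{j+1}}}f-S_{M_{\beta_{j+1}}+M_{\alpha_j}}f=\sum_{k=M_{\beta_{j+1}}+M_{\alpha_j}}^{M_{\beta_{j+1}}+M_{\alpha_{j+1}}-1}\widehat{f}(k)\,\psi_k.$$
By \eqref{28} the index window $[M_{\beta_{j+1}}+M_{\alpha_j},M_{\beta_{j+1}}+M_{\alpha_{j+1}})$ lies strictly above the spectrum of every $\psi_{M_{\beta_{l+1}}}P_l$ with $l<j$ (since then $M_{\beta_{l+1}}+M_{\alpha_{l+1}}<M_{\beta_{j+1}}$) and strictly below that of every such block with $l>j$ (since then $M_{\beta_{l+1}}\ge 2M_{\beta_{j+1}}>M_{\beta_{j+1}}+M_{\alpha_{j+1}}$), so the right-hand side collapses to $\psi_{M_{\beta_{j+1}}}P_j$ with no cross-terms and no weight factors to estimate. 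Hence by \eqref{25}, for every $x\in A_j$,
$$\bigl|S_{M_{\beta_{j+1}}+M_{\alpha_{j+1}}}f(x)-S_{M_{\beta_{j+1}}+M_{\alpha_j}}f(x)\bigr|=|P_j(x)|=1.$$
Since every $x\in E$ lies in infinitely many $A_j$, the sequence $(S_nf(x))$ cannot be Cauchy and therefore diverges, which gives the assertion for Vilenkin systems; the Walsh case is the specialization $m\equiv 2$.

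I expect the only real obstacle to be the bookkeeping needed to check, using \eqref{28}, that the shifted spectra of the blocks $\psi_{M_{\beta_{l+1}}}P_l$ really are separated by the window $[M_{\beta_{j+1}}+M_{\alpha_j},M_{\beta_{j+1}}+M_{\alpha_{j+1}})$ for every $l\neq j$. Unlike the Fej\'er-means step of Theorem \ref{Theorem_11}, there are no residual terms analogous to $II$ and $III$ requiring a cubic-index correction; once the spectra are arranged via \eqref{28}, divergence of the partial sums of $f$ on $E$ follows immediately from the identity above.
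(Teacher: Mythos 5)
Your construction is correct: the spectral bookkeeping via \eqref{28} does work out (for $l<j$ one has $M_{\beta_{l+1}}+M_{\alpha_{l+1}}<2M_{\beta_{l+1}}\le 2M_{\beta_j}\le M_{\beta_j}^3<M_{\alpha_j}+M_{\beta_{j+1}}$, and for $l>j$ one has $M_{\beta_{l+1}}\ge M_{\beta_{j+2}}>M_{\beta_{j+1}}^3-M_{\beta_{j+1}}\ge 2M_{\beta_{j+1}}>M_{\beta_{j+1}}+M_{\alpha_{j+1}}$), so the telescoped difference of partial sums really does collapse to $\psi_{M_{\beta_{j+1}}}P_j$ and has modulus $1$ on $A_j$, forcing $(S_nf(x))$ to fail the Cauchy criterion at every $x\in E$. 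However, the paper does not reprove this statement at all: it is stated as a corollary with citations, and the intended logical route is much shorter than what you did. Since the Fej\'er means are the Ces\`aro averages $\sigma_nf=\frac{1}{n}\sum_{k=0}^{n-1}S_kf$, convergence of $S_nf(x)$ at a point implies convergence of $\sigma_nf(x)$ to the same limit; hence the very function $f$ constructed in Theorem \ref{Theorem_11}, whose Fej\'er means diverge on $E$, automatically has divergent partial sums on $E$, and the corollary follows in one line by regularity of Ces\`aro summation. Your direct argument buys a self-contained proof with cleaner estimates (no analogues of the terms $II$ and $III$, and no need for the cubic index $(M_{\alpha_{j+1}}+M_{\beta_{j+1}})^3$), which is of independent interest, but it redoes work that the structure of the paper makes unnecessary; it would strengthen your write-up to at least note the one-line deduction from Theorem \ref{Theorem_11}.
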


\begin{remark}
Almost everywhere convergence of some N\"orlund and $T$ means can be found in \cite{PTW2} (see also \cite{BNPT}).
We note that this type of construction in Theorem 	\ref{Theorem_11} can be used to prove similar results for N\"orlund and $T$ means.
\end{remark}
\textbf{Acknowledgement:} We thank Professor Grigori Karagulyan for some good advices, which has improved the final version of this paper.

%\section{Acknowledgement}

% section section_name (end)
%We thank the referees for some valuable comments and suggestions, which have helped us to improve the final version of this paper.

%\nocite{}
%\bibliography{weisz,weisz-book,weisz-konf,fourier2014}
%\bibliographystyle{abbrv}
%\bibliography{C:/Users/Feri/Documents/Dropbox/Program/Macro/BibTeX/Bib/weisz,
%C:/Users/Feri/Documents/Dropbox/Program/Macro/BibTeX/Bib/weisz-book,
%C:/Users/Feri/Documents/Dropbox/Program/Macro/BibTeX/Bib/weisz-konf,
%C:/Users/Feri/Documents/Dropbox/Program/Macro/BibTeX/Bib/fourier2014}

%\bibliography{C:/Users/Feri/Documents/Dropbox/Program/Macro/BibTeX/Bib/weisz} 
%\bibliography{C:/Users/Feri/Documents/Dropbox/Tex/WinEdt/Macro/BibTeX/Bib/weisz-book}
%\bibliography{C:/Users/Feri/Documents/Dropbox/Tex/WinEdt/Macro/BibTeX/Bib/weisz-konf}
%\bibliography{C:/Users/Feri/Documents/Dropbox/Program/Macro/BibTeX/Bib/fourier2014}

%\nocite{*}
%\def\refname{References}
%\def\printchapternonum{}
%\bibliographystyle{abbrv}
%\bibliography{bib_source/references}

\end{document}